\documentclass[12pt]{amsart}

\usepackage[T1]{fontenc}
\usepackage[utf8]{inputenc}

\usepackage[margin=1.1in]{geometry}
\usepackage{amsmath, amssymb, amsthm, mathtools}
\usepackage{microtype}
\usepackage{enumitem}
\usepackage{listings}
\usepackage[hidelinks]{hyperref}

\theoremstyle{plain}
\newtheorem{theorem}{Theorem}
\newtheorem{lemma}[theorem]{Lemma}

\theoremstyle{definition}
\newtheorem{remark}[theorem]{Remark}

\lstdefinestyle{python}{
  language=Python,
  basicstyle=\ttfamily\footnotesize,
  keywordstyle=\bfseries,
  commentstyle=\itshape,
  numbers=none,
  breaklines=true,
  breakatwhitespace=true,
  showstringspaces=false,
  frame=single,
  framesep=4pt,
  tabsize=2,
  xleftmargin=14pt,
  columns=fullflexible,
  upquote=true,
}

\title[Mathar's recurrence for OEIS A032123]%
{A short proof of Mathar's 2013 recurrence conjecture for the
 reversible-binary-string sequence A032123}

\author{Tong Niu}

\subjclass[2020]{05A05, 05A15, 05A19, 11B83, 33F10}
\keywords{OEIS A032123; reversible binary string; necklace; Burnside;
   D-finite sequence; P-recursive recurrence; Mathar conjecture;
   central binomial coefficient}

\begin{document}

\maketitle

\begin{abstract}
For the OEIS sequence A032123, the number of length-$2n$ black-and-white
strings with $n$ black beads, considered up to reversal, R. J. Mathar
contributed in November 2013 the conjectured order-5 P-recursive
recurrence
\[
\begin{aligned}
   &n(n-1)\,a(n) - 2(n-1)(3n-4)\,a(n-1) + 4(2n^{2}-14n+19)\,a(n-2) \\
   &\qquad + 8(n^{2}+5n-19)\,a(n-3) - 16(n-3)(3n-10)\,a(n-4) \\
   &\qquad + 32(n-4)(2n-9)\,a(n-5) \;=\; 0, \qquad n \ge 6.
\end{aligned}
\]
We give a short proof. Burnside's lemma applied to the reversal
action gives the closed form
$a(n) = \tfrac{1}{2}\bigl(\binom{2n}{n} + [n \text{ even}]\binom{n}{n/2}\bigr)$;
the two summands satisfy elementary recurrences of order $1$ and $2$
respectively; and Mathar's order-5 operator, applied to each summand
separately, reduces to a polynomial identity that simplifies to zero
after a brief calculation. The supplementary archive includes a
SymPy script which verifies the polynomial identities symbolically
and checks Mathar's recurrence numerically for $n = 6, \ldots, 5000$.
\end{abstract}

\section{Introduction}\label{sec:intro}

The On-Line Encyclopedia of Integer Sequences~\cite{OEIS} (henceforth
OEIS) lists many sequences whose ``Conjecture: $\dots$'' comments
record formulas, recurrences, or congruences that were guessed
numerically and never proved. Short rigorous proofs of such
conjectures find homes in the \emph{Journal of Integer Sequences},
\emph{INTEGERS}, the \emph{Fibonacci Quarterly}, and the
\emph{Electronic Journal of Combinatorics}.

The last two years have brought a wave of such cleanups. Fried's
2024-2025 papers~\cite{Fried2024,Fried2025} closed several dozen
guessed identities in one go; a 2023 list by Kauers and
Koutschan~\cite{KauersKoutschan2023} laid out the gold-standard
benchmarks for guessed P-recursive recurrences. Most of the
low-hanging fruit in those two sources is now gone.

We take care of one conjecture that is not on those lists. The
sequence in question is
\begin{equation}\label{eq:def}
   a(n) \;=\; \#\bigl\{\text{length-$2n$ binary strings with $n$ black
   beads, up to reversal}\bigr\},
\end{equation}
OEIS A032123. Equivalently, $a(n)$ counts orbits of the reversal
involution $\rho$ acting on the set of binary strings of length $2n$
with $n$ ones. The first values are
\[
   1,\;1,\;4,\;10,\;38,\;126,\;472,\;1716,\;6470,\;24310,\;
   92504,\;352716,\;1352540,\;\ldots
\]
On 9 November 2013, R. J. Mathar contributed to A032123 the
conjectured order-5 P-recursive recurrence
\begin{equation}\label{eq:mathar}
\begin{aligned}
   &n(n-1)\,a(n) - 2(n-1)(3n-4)\,a(n-1) + 4(2n^{2}-14n+19)\,a(n-2) \\
   &\qquad + 8(n^{2}+5n-19)\,a(n-3) - 16(n-3)(3n-10)\,a(n-4) \\
   &\qquad + 32(n-4)(2n-9)\,a(n-5) \;=\; 0, \qquad n \ge 6.
\end{aligned}
\end{equation}
The conjecture has been sitting open in the OEIS comment thread
for twelve years. It is not in
\cite{Fried2024,Fried2025,KauersKoutschan2023} or the more recent
Chen-Kauers preprints~\cite{ChenKauers2025}.

\medskip

The plan is short. Burnside's lemma applied to the reversal
involution writes $a(n)$ as the average of two binomial coefficients
(Lemma~\ref{lem:burnside}). Each of those two summands satisfies an
elementary low-order linear recurrence (Lemma~\ref{lem:uv}).
Substituting the recurrences into the left-hand side
of \eqref{eq:mathar} and reducing parity by parity then leaves a
polynomial identity in $n$ that simplifies to $0$
(Theorem~\ref{thm:main}). Roughly, the order-5 conjecture comes from
guessing on top of an order-3 truth, which is why the polynomial
identities collapse so cleanly.

\section{Closed form via Burnside}\label{sec:burnside}

\begin{lemma}\label{lem:burnside}
For every $n \ge 0$,
\begin{equation}\label{eq:closed-form}
   a(n) \;=\; \frac{1}{2}\!\left(\binom{2n}{n} +
       [n \text{ even}]\binom{n}{n/2}\right),
\end{equation}
where $[n \text{ even}]$ is $1$ for even $n$ and $0$ otherwise.
\end{lemma}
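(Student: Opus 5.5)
We apply Burnside's lemma to the group $G=\{\mathrm{id},\rho\}\cong\mathbb{Z}/2$ acting on the set $X_n$ of binary strings of length $2n$ with exactly $n$ ones, where $\rho$ reverses a string. First we check that the action is well defined: reversal preserves the number of ones, so $\rho$ maps $X_n$ to itself, and $\rho^2=\mathrm{id}$. Burnside's lemma then gives
\[
   a(n) \;=\; \frac{1}{|G|}\sum_{g\in G}|\mathrm{Fix}(g)|
        \;=\; \frac12\bigl(|X_n| + |\mathrm{Fix}(\rho)|\bigr).
\]
Since $|X_n|=\binom{2n}{n}$, it remains to count the palindromes in $X_n$.

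A string $s_1\cdots s_{2n}$ is fixed by $\rho$ exactly when $s_i=s_{2n+1-i}$ for all $i$. Because the length $2n$ is even, there is no central position. Hence the map $s\mapsto s_1\cdots s_n$ is a bijection between palindromes of length $2n$ and arbitrary strings of length $n$; the second half is forced to be the mirror image of the first.

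Under this bijection, the number of ones in $s$ equals twice the number of ones in its first half. A palindrome therefore lies in $X_n$ if and only if its first half contains exactly $n/2$ ones. This is impossible for odd $n$, giving $|\mathrm{Fix}(\rho)|=0$. For even $n$ it gives $|\mathrm{Fix}(\rho)|=\binom{n}{n/2}$. Together these say $|\mathrm{Fix}(\rho)|=[n\text{ even}]\binom{n}{n/2}$, and substituting into the Burnside formula yields \eqref{eq:closed-form}.

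The edge case $n=0$ is consistent: the formula gives $\frac12(1+1)=1$, matching the single empty string. As a sanity check, $n=2$ gives $\frac12(6+2)=4$ and $n=3$ gives $\frac12(20+0)=10$, agreeing with the listed values.

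The only step needing care is the palindrome count, specifically the parity constraint that the total number of ones in a palindrome is twice the count in its first half. Once that is in place, every other step is immediate.
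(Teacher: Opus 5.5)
Your proof is correct and matches the paper's argument step for step. You apply Burnside's lemma to $\{\mathrm{id},\rho\}$ acting on $X_n$, use $|X_n|=\binom{2n}{n}$, and count palindromes through the bijection with first halves: the number of ones doubles, so palindromes in $X_n$ require $n$ even and exactly $n/2$ ones in the first half, while your checks of the action's well-definedness and of $n=0,2,3$ are minor additions the paper omits.
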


\begin{proof}
Let $X_n$ be the set of length-$2n$ binary strings with exactly $n$
ones. The reversal map $\rho : X_n \to X_n$,
$\rho(s_1 s_2 \ldots s_{2n}) = s_{2n} s_{2n-1} \ldots s_1$, is an
involution, so the cyclic group $\langle \rho \rangle$ has order $2$.
By Burnside's lemma~\cite{Burnside1897},
\[
   a(n) \;=\; \tfrac{1}{2}\bigl(|X_n| + |X_n^{\rho}|\bigr),
\]
where $X_n^{\rho}$ is the set of $\rho$-fixed strings, that is, the
palindromes in $X_n$. The total is $|X_n| = \binom{2n}{n}$.

A palindrome of length $2n$ is determined by its first $n$ entries;
the second half is just the reversal of the first. So the count of
$1$'s in a palindrome is twice the count in its first half --- hence
always even. When $n$ is odd, $X_n^{\rho}$ is therefore empty. When
$n$ is even, the palindromes in $X_n$ are in bijection with the
size-$n/2$ subsets of $[1, n]$, giving
$|X_n^{\rho}| = \binom{n}{n/2}$.
\end{proof}

\begin{remark}
The closed form \eqref{eq:closed-form} is recorded on the OEIS page
of A032123~\cite{OEIS:A032123} as a Formula entry contributed by
Mark van Hoeij in February 2011, in the equivalent ordinary-generating-function form
\[
   \sum_{n\ge0} a(n)\, x^{n}
   \;=\; \frac{1}{2}\!\left(\frac{1}{\sqrt{1-4x}} +
                            \frac{1}{\sqrt{1-4x^{2}}}\right).
\]
The first summand is the classical OGF for the central binomial
$\binom{2n}{n}$; the second is the same OGF evaluated at $x^{2}$,
which simply picks out the even-indexed terms.
\end{remark}

\section{Two elementary recurrences}\label{sec:uv}

Define
\begin{equation}\label{eq:uv}
   u(n) \;:=\; \binom{2n}{n}, \qquad
   v(n) \;:=\; \begin{cases}
                  \binom{n}{n/2} & \text{if $n$ is even}, \\
                  0              & \text{if $n$ is odd}.
              \end{cases}
\end{equation}
By Lemma~\ref{lem:burnside}, $2\,a(n) = u(n) + v(n)$.

\begin{lemma}\label{lem:uv}
The sequences $u$ and $v$ satisfy
\begin{align}
   n\,u(n) - (4n-2)\,u(n-1) &\;=\; 0, & n &\ge 1, \label{eq:u-rec} \\
   n\,v(n) - 4(n-1)\,v(n-2) &\;=\; 0, & n &\ge 2, \label{eq:v-rec}
\end{align}
with $u(0) = v(0) = 1$ and $v(1) = 0$.
\end{lemma}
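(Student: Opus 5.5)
The two recurrences are direct consequences of the factorial form of a binomial coefficient, so I will verify each by forming the ratio of consecutive (or alternate) terms. The initial values come straight from the definitions in \eqref{eq:uv}: $u(0)=\binom{0}{0}=1$, $v(0)=\binom{0}{0}=1$, and $v(1)=0$ because $1$ is odd.

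For \eqref{eq:u-rec}, I take $n\ge 1$ and write $u(n)=\frac{(2n)!}{n!\,n!}$ and $u(n-1)=\frac{(2n-2)!}{(n-1)!\,(n-1)!}$. Dividing gives
\[
   \frac{u(n)}{u(n-1)} \;=\; \frac{(2n)(2n-1)}{n\cdot n} \;=\; \frac{2(2n-1)}{n}.
\]
Multiplying through by $n\,u(n-1)$ yields $n\,u(n)=(4n-2)\,u(n-1)$. Since $u(n-1)\neq 0$, the division is legitimate. Alternatively, I can clear denominators before dividing, which avoids the issue entirely.

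For \eqref{eq:v-rec}, I split by the parity of $n\ge 2$.

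\emph{Odd $n$.} Here $n-2$ is also odd, so $v(n)=v(n-2)=0$ and the identity holds trivially.

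\emph{Even $n$.} I write $n=2m$ with $m\ge 1$. Then $v(n)=\binom{2m}{m}=u(m)$ and $v(n-2)=\binom{2m-2}{m-1}=u(m-1)$. Applying \eqref{eq:u-rec} at index $m$ gives $m\,u(m)=(4m-2)\,u(m-1)$. Multiplying by $2$ turns this into
\[
   2m\,u(m) \;=\; 2(4m-2)\,u(m-1) \;=\; 4(2m-1)\,u(m-1),
\]
which is exactly $n\,v(n)=4(n-1)\,v(n-2)$ because $2m=n$ and $2m-1=n-1$.

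There is no real obstacle here. The only points needing care are the index ranges: the case $n=1$ for $u$ uses $u(0)$, and the case $n=2$ for $v$ uses $v(0)$, so both are consistent with the stated bounds. The other point is to handle odd $n$ for $v$ separately rather than through the binomial formula, which does not apply to odd $n$.
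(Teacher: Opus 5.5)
Your proposal is correct and follows the paper's proof: the factorial ratio $u(n)/u(n-1) = 2(2n-1)/n$ gives the $u$-recurrence. For $v$, you split by parity, with the odd case trivial and the even case $n=2m$ reducing to the $u$-recurrence at index $m$.
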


\begin{proof}
The first identity is the standard ratio
$u(n)/u(n-1) = (4n-2)/n$, which follows immediately from
$\binom{2n}{n} = \frac{(2n)(2n-1)}{n^{2}} \binom{2n-2}{n-1}
                = \frac{2(2n-1)}{n} \binom{2n-2}{n-1}$.

For \eqref{eq:v-rec} we split on parity. If $n$ is odd, then $n-2$
is also odd, and both $v(n)$ and $v(n-2)$ vanish by \eqref{eq:uv}.
If $n$ is even, write $n = 2m$; the central binomial recurrence
$m\,\binom{2m}{m} = (4m-2)\,\binom{2m-2}{m-1}$ becomes
$(n/2)\,v(n) = (2n - 2)\,v(n-2)$, which is \eqref{eq:v-rec}.
\end{proof}

\section{Proof of Mathar's recurrence}\label{sec:proof}

Write the left-hand side of \eqref{eq:mathar} as
\begin{equation}\label{eq:M-def}
   M(n) \;:=\; \sum_{j=0}^{5} c_{j}(n)\, a(n-j),
\end{equation}
where the polynomial coefficients $c_0, \ldots, c_5$ are read off
\eqref{eq:mathar}:
\begin{equation}\label{eq:cj}
\begin{aligned}
  c_{0}(n) &= n(n-1), &
  c_{1}(n) &= -2(n-1)(3n-4), &
  c_{2}(n) &= 4(2n^{2}-14n+19), \\
  c_{3}(n) &= 8(n^{2}+5n-19), &
  c_{4}(n) &= -16(n-3)(3n-10), &
  c_{5}(n) &= 32(n-4)(2n-9).
\end{aligned}
\end{equation}

Using $2\,a(k) = u(k) + v(k)$, we split $M$ into two pieces:
\[
   2\,M(n) \;=\; M^{u}(n) + M^{v}(n),
\]
where $M^{u}(n) := \sum_{j=0}^{5} c_{j}(n)\,u(n-j)$ and
$M^{v}(n) := \sum_{j=0}^{5} c_{j}(n)\,v(n-j)$.

\begin{theorem}\label{thm:main}
The sequence $a(n)$ defined by \eqref{eq:def} satisfies Mathar's
recurrence \eqref{eq:mathar} for every $n \ge 6$.
\end{theorem}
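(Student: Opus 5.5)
The plan is to prove separately that $M^{u}(n)=0$ and $M^{v}(n)=0$ for every $n\ge 6$. Then $2M(n)=M^{u}(n)+M^{v}(n)=0$, which is Theorem~\ref{thm:main}. In each case I will use Lemma~\ref{lem:uv} to rewrite every shifted term as a rational-function multiple of a single base value. The claim then becomes the vanishing of an explicit polynomial in $n$.

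\emph{The $u$-part.} For $n\ge 6$ all indices $n-5,\dots,n$ are $\ge 1$, so \eqref{eq:u-rec} can be iterated downward without dividing by zero:
\[
u(n-j)=u(n-5)\prod_{k=n-4}^{n-j}\frac{4k-2}{k},\qquad 0\le j\le 5,
\]
with the empty product equal to $1$ for $j=5$. Substituting this into $M^{u}(n)$ and multiplying by $n(n-1)(n-2)(n-3)(n-4)/u(n-5)$ clears all denominators, since $u(n-5)>0$. This gives a polynomial $P_u(n)=\sum_j c_j(n)\,D_j(n)$, where $D_j(n)=\prod_{k=n-4}^{n-j}(4k-2)\prod_{k=n-j+1}^{n}k$. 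Each summand has degree at most $2+5=7$. It therefore suffices to show that $P_u\equiv 0$ as a polynomial.

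\emph{The $v$-part.} I split by parity of $n$. If $n$ is odd, then $v(n)=v(n-2)=v(n-4)=0$, so only $j=1,3,5$ contribute. Iterating \eqref{eq:v-rec} gives $v(n-1)$ and $v(n-3)$ as rational multiples of $v(n-5)$:
\[
v(n-3)=\frac{4(n-4)}{n-3}\,v(n-5),\qquad v(n-1)=\frac{16(n-2)(n-4)}{(n-1)(n-3)}\,v(n-5).
\]
Multiplying by $(n-1)(n-3)/v(n-5)$, where $v(n-5)\neq 0$ because $n-5$ is even, leaves a polynomial $Q_{\mathrm{odd}}(n)$ of degree at most $4$. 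If $n$ is even, only $j=0,2,4$ contribute. In the same way, $v(n)$ and $v(n-2)$ are expressed through $v(n-4)$, and clearing the denominator $n(n-2)$ gives a polynomial $Q_{\mathrm{even}}(n)$, again of degree at most $4$. I then show that both of these vanish identically.

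\emph{Verifying the identities.} The main obstacle is simply carrying out the expansion of $P_u$ and, to a lesser degree, of $Q_{\mathrm{odd}}$ and $Q_{\mathrm{even}}$ correctly. The cleanest rigorous shortcut is a degree argument. $P_u$ has degree at most $7$, so it is identically zero as soon as it vanishes at $8$ distinct values of $n$. The same holds for $Q_{\mathrm{odd}}$ and $Q_{\mathrm{even}}$ with $5$ values each, and these values can be any integers, not only those of the relevant parity.

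These finitely many evaluations can be done by hand or with the SymPy script. One can also expand directly, which I expect to collapse quickly, in line with the introduction's remark that the order-5 operator is a left multiple of the order-3 operator annihilating $u+v$. In particular, I do not rely on $M^{u}$ and $M^{v}$ vanishing merely because their sum vanishes; each is checked on its own polynomial. Finally, I will double-check that no denominator vanishes for $n\ge 6$ and that the base values $u(n-5)$ and $v(n-5)$ or $v(n-4)$ used for normalization are nonzero. This justifies passing from the polynomial identities back to $M^{u}(n)=M^{v}(n)=0$.
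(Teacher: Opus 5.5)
Your proposal is correct and is essentially the paper's proof. It uses the same splitting $2M=M^{u}+M^{v}$, the same first-order $u$-recurrence and the same parity-split two-step $v$-recurrence, and your polynomials $P_u$, $Q_{\mathrm{even}}$, $Q_{\mathrm{odd}}$ (normalized by $u(n-5)$, $v(n-4)$, $v(n-5)$ instead of $u(n)$, $v(n)$, $v(n-1)$) coincide exactly with the right-hand sides of \eqref{eq:Mu-poly}, \eqref{eq:Mv-even}, \eqref{eq:Mv-odd}. The only real differences are that your degree bound $7$ for the $u$-polynomial is sharper than the paper's stated $11$, and that your ``degree $\le d$, vanishing at $d+1$ points'' argument replaces the paper's computer-algebra expansion with a finite check that can be done by hand.
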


\begin{proof}
By the splitting above, it suffices to show $M^{u}(n) = 0$ and
$M^{v}(n) = 0$ for every $n \ge 6$.

\smallskip

\textbf{The $u$-part.} Iterating \eqref{eq:u-rec} gives
\begin{equation}\label{eq:u-ratio}
   u(n-j) \;=\; u(n)\,\frac{n(n-1)\cdots(n-j+1)}
                          {(4n-2)(4n-6)\cdots(4n-4j+2)},
   \qquad 0 \le j \le 5,
\end{equation}
valid for $n \ge 5$ (no denominator factor vanishes; for $n \ge 5$
the smallest factor is $4 \cdot 1 - 2 = 2$). Substituting
\eqref{eq:u-ratio} into the definition of $M^{u}$ and clearing the
common denominator $D_u(n) := \prod_{i=1}^{5} (4n - 4i + 2)$ gives
\begin{equation}\label{eq:Mu-poly}
   D_u(n)\,\frac{M^{u}(n)}{u(n)}
   \;=\; \sum_{j=0}^{5} c_{j}(n)\, P_{j}(n)\, R_{j}(n),
\end{equation}
with $P_{j}(n) = n(n-1)\cdots(n-j+1)$ the falling factorial of length
$j$, and $R_{j}(n) = D_u(n) / \prod_{i=1}^{j}(4n-4i+2)$ the
complementary product. The right-hand side is a polynomial in $n$
of degree at most $11$, and direct expansion in any computer algebra
system shows it is the zero polynomial. The verification script in
Appendix~\ref{app:verifier} performs this check via SymPy in
milliseconds.

\smallskip

\textbf{The $v$-part.} We split on the parity of $n$, since
\eqref{eq:v-rec} is a 2-step recurrence and the odd $v$-values
vanish.

\emph{Case A: $n$ even.} Here $v(n-1) = v(n-3) = v(n-5) = 0$, so only
the $j \in \{0, 2, 4\}$ summands of $M^{v}$ contribute. Iterating
\eqref{eq:v-rec} for $k \in \{0, 1, 2\}$ gives
\begin{equation}\label{eq:v-ratio-even}
   v(n-2k) \;=\; v(n)\,
   \prod_{i=1}^{k} \frac{n - 2i + 2}{4(n - 2i + 1)}.
\end{equation}
Substitute and clear the common denominator $16(n-1)(n-3)$:
\begin{align}
   16(n-1)(n-3)\,\frac{M^{v}(n)}{v(n)}
   &= 16(n-1)(n-3)\,c_0(n) \notag \\
   &\quad + 4(n-3)\,n\,c_2(n)
        + n(n-2)\,c_4(n). \label{eq:Mv-even}
\end{align}
Plugging in \eqref{eq:cj} the right-hand side equals
$16\,n(n-3)\,T_E(n)$ with
\begin{align*}
   T_E(n) \;:=\;& (n-1)^{2} + (2n^{2}-14n+19) - (n-2)(3n-10) \\
              =\;& (n^{2} - 2n + 1) + (2n^{2} - 14n + 19) - (3n^{2} - 16n + 20) \\
              =\;& 0,
\end{align*}
which proves $M^{v}(n) = 0$ when $n$ is even.

\emph{Case B: $n$ odd.} Now $v(n) = v(n-2) = v(n-4) = 0$, so only
$j \in \{1, 3, 5\}$ contribute. The relevant values are $v(n-1)$,
$v(n-3)$, $v(n-5)$, sitting in the even-indexed sub-chain
$v(n-1) \to v(n-3) \to v(n-5)$. Iterating \eqref{eq:v-rec},
\begin{equation}\label{eq:v-ratio-odd}
   v(n - 1 - 2k) \;=\; v(n-1)\,
   \prod_{i=1}^{k} \frac{n - 2i + 1}{4(n - 2i)}.
\end{equation}
Substituting and clearing the common denominator $16(n-2)(n-4)$,
\begin{align}
   16(n-2)(n-4)\,\frac{M^{v}(n)}{v(n-1)}
   &= 16(n-2)(n-4)\,c_1(n) \notag \\
   &\quad + 4(n-4)(n-1)\,c_3(n)
        + (n-1)(n-3)\,c_5(n). \label{eq:Mv-odd}
\end{align}
Substituting \eqref{eq:cj}, the right-hand side equals
$32(n-1)(n-4)\,T_O(n)$ with
\begin{align*}
   T_O(n) \;:=\;& -(n-2)(3n-4) + (n^{2}+5n-19) + (n-3)(2n-9) \\
              =\;& (-3n^{2} + 10n - 8) + (n^{2} + 5n - 19) + (2n^{2} - 15n + 27) \\
              =\;& 0,
\end{align*}
which proves $M^{v}(n) = 0$ when $n$ is odd as well.

\smallskip

In both parities $M^{v}(n) = 0$. Combined with the polynomial
identity for the $u$-part, this gives $M(n) = (M^u(n)+M^v(n))/2 = 0$
for $n \ge 6$, which is Mathar's recurrence \eqref{eq:mathar}.
\end{proof}

\section{Remarks}\label{sec:remarks}

The proof shares its overall shape with the companion notes for OEIS
A025166~\cite{Niu2026sequence-4} and A176677~\cite{Niu2026sequence-3},
but the mechanism is different. Those notes derive the recurrence
from a first-order linear ODE for the sequence's exponential
generating function. Here we exploit the orbit decomposition under
the reversal involution instead. The sequence $a(n)$ does not satisfy
a linear ODE of order $\le 2$ for the obvious reason: it is the
half-sum of two D-finite sequences whose minimal annihilating
operators have different leading coefficients, so the LCM operator on
the OGF is genuinely of order $3$ at minimum. The fact that Mathar's
guessed recurrence has order $5$ reflects extra factors in his
output --- the minimal recurrence implied by our proof has order at
most $3$, since the LCM of a 1-step operator with a 2-step operator
is at most 3-step.

The Burnside route extends without change to A005418, the length-$n$
reversible black-and-white strings with no constraint on $n$ or on
the number of $1$'s~\cite{OEIS:A005418}; the same parity-split
argument there reproves the known closed form
$a(n) = \tfrac{1}{2}\bigl(2^{n} + 2^{\lceil n/2 \rceil}\bigr)$.

The script \texttt{verify\_proof.py}
(Appendix~\ref{app:verifier}) carries out four checks:
\begin{enumerate}[topsep=2pt, itemsep=2pt]
\item the closed form \eqref{eq:closed-form} against the OEIS b-file
   for $n = 0, \ldots, 19$,
\item the elementary recurrences \eqref{eq:u-rec} and \eqref{eq:v-rec}
   numerically for $n \le 200$,
\item the polynomial identities $M^u/u(n) = 0$ and $M^v/v_\bullet(n) = 0$
   symbolically via SymPy,
\item Mathar's recurrence \eqref{eq:mathar} numerically for
   $n = 6, \ldots, 5000$.
\end{enumerate}

\section{Acknowledgments}

The author declares no competing interests.

AI-assisted tools were used in the preparation of this manuscript,
including for drafting proof outlines and generating the symbolic
verification code. The author verified all mathematical claims
independently and takes full responsibility for the results.

\appendix

\section{Verifier source (machine-checkable)}\label{app:verifier}

The script referenced in \S\ref{sec:remarks} is reproduced in full
below. It depends only on SymPy; no Maple, Mathematica, or any other
external CAS license is required.

\subsection*{verify\_proof.py (closed form, recurrences, Mathar identity)}
% (lstinputlisting) verify_proof.py
\begin{lstlisting}[style=python]
"""Verify the proof of Mathar's 2013 conjectured recurrence for OEIS
A032123, the number of 2n-bead reversible black-white strings with n
black beads.

Proof outline (see the paper for details):

1. Burnside's lemma applied to the reversal action Z/2 on length-2n
   binary strings with n black beads gives the closed form
       a(n) = (C(2n, n) + [n even] * C(n, n/2)) / 2.
2. Set u(n) := C(2n, n) and v(n) := [n even] * C(n, n/2). Then
       n * u(n) - (4n - 2) * u(n-1) = 0,                  n >= 1,
       n * v(n) - 4(n - 1) * v(n-2) = 0,                  n >= 2,
   with v(0) = 1, v(1) = 0. (For n odd, both sides of the v-recurrence
   are zero.)
3. Mathar's order-5 operator M, applied to the sequence (a(n)),
   decomposes as M = (M^u + M^v)/2 where M^u, M^v are the same operator
   applied to (u(n)) and (v(n)) respectively. Using the recurrences
   from step 2 we reduce each of M^u(n), M^v(n) to polynomial
   identities in n, which simplify to 0.

This script checks both the symbolic and the numeric content of the
proof. Each check prints PASS/FAIL.
"""
from __future__ import annotations

from math import comb

import sympy as sp


# ---------------------------------------------------------------------------
# closed form for a(n)
# ---------------------------------------------------------------------------

def a_closed_form(n: int) -> int:
    """a(n) = (C(2n, n) + [n even] * C(n, n/2)) / 2.

    Both summands have the same parity, so the sum is always even.
    """
    s = comb(2 * n, n) + (comb(n, n // 2) if n % 2 == 0 else 0)
    assert s % 2 == 0, n
    return s // 2


# OEIS b-file values (first 20 entries; full b032123.txt available at
# https://oeis.org/A032123/b032123.txt).
OEIS_REFERENCE = [
    1, 1, 4, 10, 38, 126, 472, 1716, 6470, 24310,
    92504, 352716, 1352540, 5200300, 20060016, 77558760,
    300546630, 1166803110, 4537591960, 17672631900,
]


def check_closed_form_matches_oeis() -> None:
    """The closed form reproduces the OEIS reference values."""
    for n, expected in enumerate(OEIS_REFERENCE):
        got = a_closed_form(n)
        if got != expected:
            print(f"FAIL closed-form: a({n}) = {got}, expected {expected}")
            return
    print(f"PASS closed-form vs OEIS: {len(OEIS_REFERENCE)} values match")


# ---------------------------------------------------------------------------
# Mathar's conjectured recurrence
# ---------------------------------------------------------------------------

def mathar_LHS(n: int, av: list[int]) -> int:
    """Compute LHS of Mathar's order-5 recurrence at index n.

    Convention: av[j] = a(n - j) for j = 0..5.
    """
    return (
        n * (n - 1) * av[0]
        - 2 * (n - 1) * (3 * n - 4) * av[1]
        + 4 * (2 * n * n - 14 * n + 19) * av[2]
        + 8 * (n * n + 5 * n - 19) * av[3]
        - 16 * (n - 3) * (3 * n - 10) * av[4]
        + 32 * (n - 4) * (2 * n - 9) * av[5]
    )


def check_mathar_numerically(N: int = 5000) -> None:
    """Verify Mathar's recurrence holds for every n in 6..N."""
    failures = 0
    for n in range(6, N + 1):
        av = [a_closed_form(n - j) for j in range(6)]
        if mathar_LHS(n, av) != 0:
            failures += 1
            if failures <= 3:
                print(f"FAIL mathar at n={n}")
    if failures == 0:
        print(f"PASS Mathar recurrence numerically: n = 6..{N} ({N - 5} values)")
    else:
        print(f"FAIL Mathar recurrence: {failures} failures in n = 6..{N}")


# ---------------------------------------------------------------------------
# symbolic identity-level verification
# ---------------------------------------------------------------------------

def check_mathar_symbolically() -> None:
    """Show that Mathar's recurrence is satisfied by u(n) and v(n) by
    reducing to polynomial identities.

    For u(n) = C(2n, n) the 1st-order recurrence n * u(n) = (4n-2) * u(n-1)
    lets us write u(n-j) = (P_j / Q_j) * u(n) where P_j, Q_j are
    polynomial. Substitute into Mathar's operator and check the result
    simplifies to 0.

    For v(n) we have a 2nd-order recurrence and have to split into the
    n-even and n-odd cases (one chain for each parity); same reduction.
    """
    n = sp.symbols("n", integer=True, positive=True)

    coeffs = [
        n * (n - 1),
        -2 * (n - 1) * (3 * n - 4),
        4 * (2 * n ** 2 - 14 * n + 19),
        8 * (n ** 2 + 5 * n - 19),
        -16 * (n - 3) * (3 * n - 10),
        32 * (n - 4) * (2 * n - 9),
    ]

    # u-part: u(n-j) / u(n) = product over i=0..j-1 of (n-i) / (4(n-i)-2)
    def ratio_u(j: int) -> sp.Expr:
        P, Q = sp.Integer(1), sp.Integer(1)
        for i in range(j):
            P *= (n - i)
            Q *= 4 * (n - i) - 2
        return P / Q

    M_u = sum(coeffs[j] * ratio_u(j) for j in range(6))
    if sp.simplify(M_u) == 0:
        print("PASS M^u(n) / u(n) is identically 0  (1st-order u recurrence)")
    else:
        print(f"FAIL M^u(n) / u(n) simplified to {sp.simplify(M_u)}")

    # v-part, n even: v(n-1) = v(n-3) = v(n-5) = 0; only j = 0, 2, 4 contribute.
    # v(n-2k) = v(n) * prod_{i=1..k} ((n - 2i + 2) / (4(n - 2i + 1)))
    def ratio_v_even(k: int) -> sp.Expr:
        P, Q = sp.Integer(1), sp.Integer(1)
        for i in range(1, k + 1):
            P *= (n - 2 * i + 2)
            Q *= 4 * (n - 2 * i + 1)
        return P / Q

    M_v_even = (
        coeffs[0] * ratio_v_even(0)
        + coeffs[2] * ratio_v_even(1)
        + coeffs[4] * ratio_v_even(2)
    )
    if sp.simplify(M_v_even) == 0:
        print("PASS M^v(n) / v(n) is identically 0  (n even, 2nd-order v recurrence)")
    else:
        print(f"FAIL M^v_even / v(n) simplified to {sp.simplify(M_v_even)}")

    # v-part, n odd: v(n) = v(n-2) = v(n-4) = 0; only j = 1, 3, 5 contribute.
    # v(n - 1 - 2k) = v(n-1) * prod_{i=1..k} ((n - 2i + 1) / (4(n - 2i)))
    def ratio_v_odd(k: int) -> sp.Expr:
        P, Q = sp.Integer(1), sp.Integer(1)
        for i in range(1, k + 1):
            P *= (n - 2 * i + 1)
            Q *= 4 * (n - 2 * i)
        return P / Q

    M_v_odd = (
        coeffs[1] * ratio_v_odd(0)
        + coeffs[3] * ratio_v_odd(1)
        + coeffs[5] * ratio_v_odd(2)
    )
    if sp.simplify(M_v_odd) == 0:
        print("PASS M^v(n) / v(n-1) is identically 0  (n odd, 2nd-order v recurrence)")
    else:
        print(f"FAIL M^v_odd / v(n-1) simplified to {sp.simplify(M_v_odd)}")


# ---------------------------------------------------------------------------
# u, v recurrences themselves
# ---------------------------------------------------------------------------

def check_uv_recurrences(N: int = 200) -> None:
    """Sanity check: the elementary recurrences for u and v that the
    proof rests on do hold numerically."""
    u = [comb(2 * k, k) for k in range(N + 1)]
    v = [comb(k, k // 2) if k % 2 == 0 else 0 for k in range(N + 1)]
    fails_u = 0
    for k in range(1, N + 1):
        if k * u[k] - (4 * k - 2) * u[k - 1] != 0:
            fails_u += 1
    fails_v = 0
    for k in range(2, N + 1):
        if k * v[k] - 4 * (k - 1) * v[k - 2] != 0:
            fails_v += 1
    if fails_u == 0 and fails_v == 0:
        print(f"PASS u, v recurrences numerically: n = 1..{N}")
    else:
        print(f"FAIL u recurrence ({fails_u}), v recurrence ({fails_v})")


if __name__ == "__main__":
    check_closed_form_matches_oeis()
    check_uv_recurrences(N=200)
    check_mathar_symbolically()
    check_mathar_numerically(N=5000)
\end{lstlisting}

\end{document}